\newtheorem{definition}{Definition}
\newtheorem{theorem}{Theorem}
\newtheorem{lemma}{Lemma}
\newcommand{\qed}{\nobreak \ifvmode \relax \else
      \ifdim\lastskip<1.5em \hskip-\lastskip
      \hskip1.5em plus0em minus0.5em \fi \nobreak
      \vrule height0.75em width0.5em depth0.25em\fi}
\title{\LARGE \bf
Computation of Linear Comparison Equations for Stability Analysis of Interconnected Systems
}
\author{Soumya Kundu and Marian Anghel
\thanks{*This work was supported by the U.S. Department of Energy
through the LANL/LDRD Program.}
\thanks{$^{1}$Soumya Kundu is with the Center for Nonlinear Studies and Information Sciences Group (CCS-3), Los Alamos National Laboratory, Los Alamos, USA
        {\tt\small soumya@lanl.gov}}%
\thanks{$^{2}$Marian Anghel is with the Information Sciences Group (CCS-3), Los Alamos National Laboratory, Los Alamos, USA
        {\tt\small manghel@lanl.gov}}%
}
\begin{document}

\maketitle
\thispagestyle{empty}
\pagestyle{empty}

\begin{abstract}

Sum-of-squares (SOS) methods have been shown to be very useful in computing polynomial Lyapunov functions for systems of reasonably small size. However for large scale systems it is necessary to use a scalable alternative using vector Lyapunov functions. Earlier works have shown that under certain conditions the stability of an interconnected system can be studied through suitable comparison equations. However finding such comparison equations can be non-trivial. In this work we propose an SOS based systematic procedure to directly compute the comparison equations for interconnected system with polynomial dynamics. With an example of interacting Van der Pol systems, we illustrate how this facilitates a scalable and parallel approach to stability analysis.

\end{abstract}

\section{INTRODUCTION}

Lyapunov functions methods have long been used in studying stability properties of dynamical systems \cite{Lyapunov:1892,Haddad:2008}. Finding a Lyapunov function for a given dynamical system, however, is often not an easy task. Recent advances in sum-of-squares (SOS) methods and semi-definite programming, \cite{sostools13,Antonis:2005a,Sturm:1999}, have enabled algorithmic construction of polynomial Lyapunov functions \cite{Wloszek:2003,Anghel:2013}. However such sum-of-squares based computational methods become intractable as the system size grows to larger than 6-8 states \cite{Antonis:2012,Anderson:2010}. 

It is useful to model large-scale systems in the form of many interacting subsystems and study the stability of the full interconnected system using only the subsystem Lyapunov functions. There are different functional forms for the Lyapunov function of the interconnected system, such as a scalar Lyapunov function expressed as a weighted sum of the subsystem Lyapunov functions, or applications of vector Lyapunov functions and comparison principles \cite{Siljak:1972,Weissenberger:1973, Michel:1983,Araki:1978}. Particularly the formulations using vector Lyapunov functions are computationally very attractive because of their parallel structure and scalability. Based upon the results on comparison equations \cite{Conti:1956,Brauer:1961,Beckenbach:1961}, the authors in \cite{Bellman:1962,Bailey:1966} introduced the concept of vector Lyapunov functions. It was shown that if the subsystem Lyapunov functions and the interactions satisfy certain conditions, then the stability of the interconnected system can be studied by analyzing the stability of a set of linear ordinary differential equations. However computing these comparison equations, for a given interconnected system, still remained a challenge. In absence of suitable computational tools, analytical insights were used to build those comparison equations, such as the trigonometric inequalities in power systems network \cite{Jocic:1978}.

In this work we use the sum-of-squares and semi-definite programming methods to study the stability of an interconnected system by computing the comparison equations. While this approach is applicable to any generic dynamical system, we choose a randomly generated network of modified\footnote{We choose the Van der Pol `oscillator' parameters in such a way that these have a stable equilibrium at origin.} Van der Pol oscillators for illustration. Each Van der Pol oscillator can be represented as a two-state system with state dynamic equations as polynomials of degree three \cite{van1926}. The network is then decomposed into many interacting subsystems. Each subsystem parameters are so chosen that individually each subsystem is stable, when the disturbances from neighbors are zero. SOS based expanding interior algorithm \cite{Wloszek:2003,Anghel:2013} is used to obtain estimate of region of attraction as sub-level sets of polynomial Lyapunov functions for each such subsystem. Finally SOS optimization is used to compute the linear comparison equation to certify stability of the network under disturbances. Following some brief background in Sec.\,\ref{S:background} we outline the problem statement in Sec.\,\ref{S:problem}. We present the SOS-based direct approach to computing the comparison equations in Sec.\,\ref{S:stability}. Sec.\,\ref{S:example} shows an application of comparison equations to stability analysis of a network of Van der Pol systems. We conclude the article in Sec.\,\ref{S:conclusion}.

\section{BASIC CONCEPTS AND BACKGROUND}
\label{S:background}

\subsection{Lyapunov Stability Methods}
\label{S:Lyap}
Let us consider the dynamical system 
\begin{align}\label{E:f}
&\dot{x}\left(t\right) = f\left(x\left(t\right)\right),\quad t\geq 0,~x\in\mathbb{R}^n, ~f\left(0\right)=0\,,
\end{align}
with an equilibrium at the origin\footnote{Note that this is not a restrictive assumption, since by shifting of state variables, the origin can always be made an equilibrium point.}, and $f:\mathbb{R}^n\rightarrow \mathbb{R}^n$ is locally Lipschitz. 
The important notions of stability are:
\begin{definition}\label{D:stability}
The equilibrium point at origin is called 
\begin{enumerate}
\item stable in the sense of Lyapunov (i.s.L) if
\begin{align*}
\forall \epsilon\!>\!0, \exists \delta\!>\!0 ~\text{s.t.}~ \left\|x(0)\!\right\|_2\!<\!\delta\!\!\implies\!\!\left\|x(t)\!\right\|_2\!<\!\epsilon~\forall t,
\end{align*}
\item asymptotically stable if it is stable i.s.L, and
\begin{align*}
\exists \tilde{\delta}\!>\!0 ~\text{s.t.}~ \left\|x(0)\!\right\|_2\!<\!\tilde{\delta}\!\implies\!\lim_{t\rightarrow +\infty}\left\|x(t)\!\right\|_2=0,
\end{align*}
\item exponentially stable if it is asymptotically stable, and 
\begin{align*}
\exists b,c,\hat{\delta} \!\!>\! 0 ~\text{s.t.}~
\left\|\!x(0)\!\!\right\|_2\ \!\!\!\!<\!\! \hat{\delta} \!\!\!\implies\!\!\!\! \left\|\!x(t)\!\!\right\|_2\!<\!ce^{-bt}\!\left\|\!x(0)\!\!\right\|_2~\forall t
\end{align*}
\end{enumerate}
\end{definition}

The Lyapunov stability theorem \cite{Lyapunov:1892,Slotine:1991}, also called Lyapunov's first or direct method, presents a sufficient condition of stability through the construction of a certain positive definite function.
\begin{theorem}\label{T:Lyap}
The equilbrium point $x=0$ of the dynamical system in (\ref{E:f}) is stable i.s.L in $\mathcal{D}\subseteq\mathbb{R}^n$, if there exists a continuously differentiable positive definite function {$\tilde{V}:\mathcal{D}\rightarrow \mathbb{R}$} (henceforth referred to as Lyapunov function) such that,
\begin{subequations}\label{E:Lyap}
\begin{align}
\tilde{V}\left(0\right) &= 0\,, \\
\tilde{V}\left(x\right) &>0,\forall x\in\mathcal{D}\backslash{\left\lbrace0\right\rbrace}\,,\\
\text{and, }-\dot{\tilde{V}}\left(x\right)&\geq 0,\forall x\in\mathcal{D}\,.
\end{align}
\end{subequations}
If $\tilde{V}$ satisfies $-\dot{\tilde{V}}(x)>0,\forall x\in\mathcal{D}\backslash{\left\lbrace0\right\rbrace},$ then the equilibrium point at origin is asymptotically stable in $\mathcal{D}$.
Further, the origin is exponentially stable\footnote{We will be referring to $\alpha>0$ in \eqref{E:Lyap_alpha} as the `self-decay rate'.} in $\mathcal{D}\in\mathbb{R}^n$ if 
\begin{align}\label{E:Lyap_alpha}
\exists \alpha>0,~\text{s.t.}~-\dot{\tilde{V}}\left(x\right)&\geq \alpha \tilde{V}\left(x\right),\forall x\in\mathcal{D}\,.
\end{align}
\end{theorem}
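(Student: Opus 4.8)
The plan is to run the classical three-part argument for Lyapunov's direct method, using only the monotonicity of $\tilde V$ along solutions together with compactness. For stability i.s.L.: given $\epsilon>0$, pick $r\in(0,\epsilon]$ so that the closed ball $\overline{\mathcal B}_r\subseteq\mathcal D$; by continuity and positive definiteness $m:=\min_{\|x\|_2=r}\tilde V(x)>0$, so for any $\beta\in(0,m)$ the sublevel set $\Omega_\beta=\{x\in\overline{\mathcal B}_r:\tilde V(x)\le\beta\}$ lies strictly inside $\mathcal B_r$, and continuity of $\tilde V$ at $0$ supplies a $\delta>0$ with $\mathcal B_\delta\subseteq\Omega_\beta$. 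Since $\dot{\tilde V}\le0$ on $\mathcal D$, $t\mapsto\tilde V(x(t))$ is nonincreasing, so a trajectory starting in $\mathcal B_\delta$ stays in $\Omega_\beta$ (which is compact in $\mathcal D$, so the solution is also forward-complete by the usual escape-time argument applied to the locally Lipschitz $f$) and hence in $\mathcal B_\epsilon$ for all $t\ge0$.

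For asymptotic stability, assume in addition $-\dot{\tilde V}>0$ on $\mathcal D\setminus\{0\}$. Along a trajectory started in $\mathcal B_\delta$, $\tilde V(x(t))$ is nonincreasing and bounded below by $0$, so it converges to some $c\ge0$. If $c>0$, then by continuity of $\tilde V$ and $\tilde V(0)=0$ the trajectory is trapped in a compact annulus $\{\eta\le\|x\|_2\le r\}$ avoiding the origin, on which the continuous function $-\dot{\tilde V}$ has a strictly positive minimum $\gamma$, forcing $\tilde V(x(t))\le\tilde V(x(0))-\gamma t\to-\infty$, a contradiction. Hence $c=0$, and positive definiteness plus continuity of $\tilde V$ then give $\|x(t)\|_2\to0$; taking the largest sublevel set of $\tilde V$ contained in $\mathcal D$ gives the region-of-attraction version of the statement.

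For exponential stability, assume \eqref{E:Lyap_alpha}. Then $\frac{d}{dt}\tilde V(x(t))=\dot{\tilde V}(x(t))\le-\alpha\,\tilde V(x(t))$, and the scalar comparison (Gr\"onwall) lemma yields $\tilde V(x(t))\le\tilde V(x(0))\,e^{-\alpha t}$. To convert this into a decay estimate on the state one needs to sandwich $\tilde V$ between powers of the norm near the origin, i.e.\ $k_1\|x\|_2^{a}\le\tilde V(x)\le k_2\|x\|_2^{a}$ on a neighborhood of $0$ for some $k_1,k_2,a>0$; this holds automatically when $\tilde V$ is a positive definite polynomial, with $a$ the degree of its lowest-order (necessarily definite) homogeneous part. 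Chaining the inequalities gives $\|x(t)\|_2\le(k_2/k_1)^{1/a}e^{-(\alpha/a)t}\|x(0)\|_2$, which is exponential stability with $c=(k_2/k_1)^{1/a}$ and $b=\alpha/a$.

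The first two parts are routine bookkeeping once the sublevel-set construction is in place, the only delicate points being the forward-invariance and forward-completeness claims. The genuine obstacle is the last part: the exponential bound on $\tilde V$ does not by itself bound $\|x\|_2$, so one must supply the polynomial (or quadratic) upper and lower bounds relating $\tilde V$ and $\|x\|_2$ — equivalently, restrict attention to Lyapunov functions whose lowest-order behavior is definite, which is the case for the polynomial $\tilde V$ produced by the SOS machinery used later — and verify that the region on which \eqref{E:Lyap_alpha} is assumed is itself forward invariant.
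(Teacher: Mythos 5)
The paper itself gives no proof of Theorem~\ref{T:Lyap}; it is quoted as a classical result from the references cited there, so the only benchmark is the standard textbook argument, which is exactly what you run. Your sublevel-set construction for stability i.s.L.\ (with the forward-completeness remark) and your asymptotic-stability argument (monotone limit $c$, strictly positive minimum of $-\dot{\tilde V}$ on a compact annulus, contradiction) are the standard proofs and are correct as written.

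The one genuine issue is in the exponential-stability part, and it is partly a looseness of the statement itself that you correctly detect but then patch with a false auxiliary claim. You are right that \eqref{E:Lyap_alpha} only yields $\tilde V(x(t))\le e^{-\alpha t}\,\tilde V(x(0))$, and that converting this into a decay rate for $\|x(t)\|_2$ requires two-sided power bounds $k_1\|x\|_2^a\le\tilde V(x)\le k_2\|x\|_2^a$ near the origin; without them the conclusion can fail (e.g.\ $\dot x=-x^3$ with $\tilde V(x)=e^{-1/x^2}$, extended by $0$ at $x=0$, satisfies $\dot{\tilde V}=-2\tilde V$ yet $x(t)$ decays only like $t^{-1/2}$). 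However, your claim that these bounds hold ``automatically'' for every positive definite polynomial, because its lowest-order homogeneous part is ``necessarily definite,'' is wrong: $\tilde V(x)=x_1^2+x_2^4$ is positive definite, its lowest-order part $x_1^2$ is only semidefinite, and no single exponent $a$ sandwiches it (behaviour along the $x_2$-axis forces $a\ge4$ while the $x_1$-axis forces $a\le2$). The correct repair is to state the sandwich bounds as an explicit hypothesis (as in Khalil-type formulations), or to observe that they hold with $a=2$ for the quadratic Lyapunov functions with positive definite Hessian that this paper actually computes; with that in place, your Gr\"onwall chain giving $c=(k_2/k_1)^{1/a}$ and $b=\alpha/a$ is fine.
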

Here $\dot{\tilde{V}}(x)=\nabla{\tilde{V}}^Tf(x)$. When there exists such a function $\tilde{V}\left(x\right)$, the region of attraction (ROA) of the stable equilibrium point at origin can be (conservatively) estimated as
\begin{subequations}\label{E:ROA}
\begin{align}
\mathcal{R}&:=\left\lbrace x\in\mathcal{D}\left| \tilde{V}(x)\leq \gamma^{max}\right.\right\rbrace,\\
\text{where,}~\gamma^{max}&:=\arg\max_\gamma\left\lbrace x\in\mathbb{R}^n\left| \tilde{V}(x)\leq\gamma\right.\right\rbrace \subseteq \mathcal{D}.
\end{align}
\end{subequations}
The Lyapunov function can be scaled by $\gamma^{max}$, so that,
\begin{subequations}\label{E:ROA2}
\begin{align}
\mathcal{R}:=&\left\lbrace x\in\mathbb{R}^n\left| {V}(x)\leq 1\right.\right\rbrace,\\
\text{where,}~{V}(x)=& ~{\tilde{V}(x)}/{\gamma^{max}}.
\end{align}
\end{subequations}
Henceforth, for simplicity, we would assume, without any serious loss of generality, that the ROA is estimated to be sub-level set of ${V}(x)=1$.

\subsection{Sum-of-Squares and Positivstellensatz Theorem}
\label{subsec:SOSmethod}
Relatively recent studies have shown that sum-of-squares based optimization techniques can be utilized in finding Lyapunov functions by restricting the search space to sum-of-square polynomials \cite{Wloszek:2003,Parrilo:2000,Tan:2006,Anghel:2013}. Let us denote by $\mathbb{R}\left[x\right]$ the set of all polynomials in $x\in\mathbb{R}^n$. Then,
{\begin{definition}
A multivariate polynomial {$p \in \mathbb{R}[x]$, $x\in\mathbb{R}^n$}, is called a sum-of-squares (SOS) if there exists $h_i\in\mathbb{R}[x]$, $i\in\left\lbrace 1,2,\dots,r\right\rbrace$, such that $p(x) = \sum_{i=1}^r h_i^2(x)$. Further, we denote the set of all SOS polynomials in $x\in\mathbb{R}^n$ by $\Sigma[x]$.
\end{definition}
Checking if $p\in\mathbb{R}[x]$ is an SOS is a semi-definite problem} which can be solved with a MATLAB$^\text{\textregistered}$ toolbox SOSTOOLS \cite{sostools13,Antonis:2005a} along with a semidefinite programming solver such as SeDuMi \cite{Sturm:1999}.
{SOS technique can be used to search for polynomial Lyapunov functions, by translating \eqref{E:Lyap} to equivalent SOS conditions \cite{sostools13,Antonis:2002,Wloszek:2003,Wloszek:2005,Antonis:2005,Antonis:2005a,Antonis:2005b }.}
An important result from algebraic geometry called Putinar's Positivstellensatz theorem \cite{Putinar:1993,Lasserre:2009} helps in translating the SOS conditions into SOS feasibility problems. 
Then the Putinar's Positivestellensatz theorem\footnote{For other versions of the Positivstellensatz theorem please refer to \cite{Lasserre:2009}.} states,
{\begin{theorem}\label{T:Putinar}
Let $\mathcal{K}= \left\lbrace x\in\mathbb{R}^n\left\vert u_1(x) \geq 0, \dots , u_m(x)\geq 0\right.\right\rbrace$ be a
compact set, where $u_j\in\mathbb{R}[x]$, $\forall j\in\left\lbrace 1,\dots,m\right\rbrace$. Suppose
\begin{align}\label{E:Putinar}
\exists\, u\!\in\!\mathbb{R}[x],\,\text{so that},& \left\lbrace \!\!\begin{array}{l}\!\! u\!\in\!\!\left\lbrace \sigma_0 \!\!+\!\! \sum_j\sigma_ju_j\!\!\left\vert\, \sigma_0,\sigma_j\!\!\in\!\Sigma[x],\forall j\!\!\right.\right\rbrace \\
\!\!\&\left\lbrace x\in\mathbb{R}^n\left\vert u(x)\geq 0\!\!\right.\right\rbrace\,\text{is compact.}\end{array}\!\!\right.
\end{align}
If $p(x)\!>\!0,\,\forall x\!\in\!\!\mathcal{K}$, then $p\in \left\lbrace \sigma_0 \!\!+\!\! \sum_j\sigma_ju_j\!\!\left\vert\, \sigma_0,\sigma_j\!\!\in\!\Sigma[x],\forall j\!\!\right.\right\rbrace$.
\end{theorem}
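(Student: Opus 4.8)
The plan is to follow the classical functional-analytic route for this representation theorem \cite{Putinar:1993} (see also \cite{Lasserre:2009}): argue by contradiction, separate $p$ from the quadratic module by a linear functional, realize that functional as integration against a positive measure via a Gelfand--Naimark--Segal (GNS) construction and the spectral theorem, and then show that the measure lives on $\mathcal{K}$. Throughout, write $M := \left\lbrace \sigma_0 + \sum_j \sigma_j u_j \left\vert \sigma_0,\sigma_j \in \Sigma[x],\,\forall j \right.\right\rbrace$ for the quadratic module appearing in \eqref{E:Putinar}, so that the goal is $p\in M$. The first ingredient is the \emph{Archimedean reduction}: the compactness condition in \eqref{E:Putinar} is equivalent to the property that $N-\sum_{i=1}^{n} x_i^2\in M$ for some $N\in\mathbb{N}$, and hence that for every $f\in\mathbb{R}[x]$ there is $N_f\in\mathbb{N}$ with $N_f\pm f\in M$; this is a standard lemma on quadratic modules and is the only place \eqref{E:Putinar} is used. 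In particular it makes the constant $1$ an internal (algebraic-interior) point of the convex cone $M$, since $1\pm f/N_f\in M$.

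Now suppose, for contradiction, that $p>0$ on $\mathcal{K}$ but $p\notin M$. Since $M$ is a convex cone with internal point $1$ and $p\notin M$, a Hahn--Banach (Eidelheit) separation argument produces a nonzero linear functional $L:\mathbb{R}[x]\to\mathbb{R}$ with $L(M)\ge 0$ and $L(p)\le 0$; normalize so that $L(1)=1$. Then $L(\sigma)\ge 0$ for all $\sigma\in\Sigma[x]$, so $\langle f,g\rangle:=L(fg)$ is a positive semidefinite symmetric bilinear form on $\mathbb{R}[x]$. Quotienting by $\{f:L(f^2)=0\}$ and completing gives a real Hilbert space $H$ in which the class $\xi$ of $1$ is cyclic and multiplication by $x_i$ descends to a symmetric operator $A_i$. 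The membership $N-x_i^2\in M$, together with $f^2\in\Sigma[x]$, gives $L(x_i^2f^2)\le N\,L(f^2)$, so each $A_i$ is bounded, extends to a bounded self-adjoint operator on $H$, and the $A_i$ pairwise commute; the joint spectral theorem then yields a compactly supported positive Borel measure $\mu$ on $\mathbb{R}^n$ with $L(f)=\int f\,d\mu$ for all $f\in\mathbb{R}[x]$.

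It remains to locate the support of $\mu$. Since $L(g^2u_j)\ge 0$ for every $g\in\mathbb{R}[x]$, the self-adjoint operator $u_j(A)$ is positive semidefinite, which forces $\mu(\{x:u_j(x)<0\})=0$, hence $\operatorname{supp}\mu\subseteq\{x:u_j(x)\ge 0\}$; intersecting over $j$ gives $\operatorname{supp}\mu\subseteq\mathcal{K}$. Finally $\mu(\mathbb{R}^n)=L(1)=1>0$ while $p>0$ on the compact set $\mathcal{K}$, so $L(p)=\int_{\mathcal{K}}p\,d\mu>0$, contradicting $L(p)\le 0$. Therefore $p\in M$, which is the claim.

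I expect the main obstacle to be the two analytic bridges between the algebra and the geometry in the GNS/spectral construction: (i) deriving boundedness of the multiplication operators $A_i$ from the Archimedean property, which is exactly what makes the spectral theorem applicable, and (ii) converting operator-positivity of $u_j(A)$ into the support statement for the associated scalar spectral measure. The separation step also needs care, because $M$ need not be closed; one must genuinely exploit that the Archimedean reduction makes $1$ an internal point of the cone rather than invoke a closed-cone separation theorem.
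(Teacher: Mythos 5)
The paper itself gives no proof of Theorem~\ref{T:Putinar}; it is quoted as background and deferred to \cite{Putinar:1993,Lasserre:2009}, and your sketch reproduces exactly that classical functional-analytic argument (Archimedean reduction, Eidelheit separation using $1$ as an order unit, GNS construction with bounded commuting multiplication operators, joint spectral theorem, support localization of the representing measure), so in outline it is correct and matches the cited source rather than anything internal to the paper. The only caveat worth recording is that your ``Archimedean reduction'' is not a light lemma: passing from the hypothesis in \eqref{E:Putinar} (some $u$ in the quadratic module with $\left\lbrace x\,\vert\, u(x)\geq 0\right\rbrace$ compact) to $N-\sum_{i}x_i^2$ lying in the module is usually obtained by applying Schm\"udgen's Positivstellensatz to the single constraint $u$ (or W\"ormann's argument), so it carries real content; likewise, before normalizing $L(1)=1$ one should note that $L(1)=0$ together with $N_f\pm f$ in the module would force $L\equiv 0$. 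With those two points made explicit, the sketch is a faithful proof plan for the theorem as stated.
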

Often for the $u_i,\,\forall i,$ used in this work, the existence of $u(x)$ in \eqref{E:Putinar} would be guaranteed \cite{Lasserre:2009}}.

\subsection{Linear Comparison Principle}
\label{subsec:comparison}
Before finishing this section, let us take a look at a nice result on the ordinary differential equations which helps form the framework of stability analysis of inter-connected systems via vector Lyapunov functions. Noting that all the elements of the vector $e^{At},~ t\geq 0$, where $A=\left[a_{ij}\right]\in\mathbb{R}^{m\times m}$, are non-negative if and only if $a_{ij}\geq 0, i\neq j$, the authors in \cite{Beckenbach:1961,Bellman:1962} proposed the following result:
\begin{lemma}\label{L:comparison}
Let $A=[a_{ij}]\in\mathbb{R}^{m\times m}$ have only non-negative off-diagonal elements, i.e. $a_{ij}\geq 0,~i\neq j$. Then 
\begin{align}\label{E:comp_ineq}
\dot{v}(t)\leq A\,v(t), ~t\geq 0, ~v\in\mathbb{R}^n, ~v(0) = v_0, 
\end{align}
implies $v(t)\leq w(t),~\forall t\geq 0$, where 
\begin{align}\label{E:comp_eq}
\dot{w}(t)= A\,w(t), ~t\geq 0, ~w\in\mathbb{R}^n, ~w(0) = v(0) = v_0. 
\end{align}
\end{lemma}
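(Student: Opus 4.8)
The plan is to convert the vector differential \emph{inequality} into a linear differential \emph{equation} driven by a non-negative forcing term, and then read off the conclusion from the variation-of-constants formula together with the sign property of $e^{At}$ recalled just above the lemma. First I would introduce the slack $\phi(t):=A\,v(t)-\dot v(t)$; by hypothesis \eqref{E:comp_ineq} this is non-negative in every component for all $t\ge 0$, and it is continuous if $v$ is $C^1$ (locally integrable if $v$ is only absolutely continuous, which is all the argument really needs). Setting $z(t):=w(t)-v(t)$ and subtracting $\dot v(t)=A\,v(t)-\phi(t)$ from \eqref{E:comp_eq} gives the \emph{linear} inhomogeneous system $\dot z(t)=A\,z(t)+\phi(t)$ with initial condition $z(0)=w(0)-v(0)=0$.

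Next I would apply Duhamel's formula to this linear system, obtaining the closed form
\begin{align*}
z(t)=e^{At}z(0)+\int_0^t e^{A(t-s)}\,\phi(s)\,ds=\int_0^t e^{A(t-s)}\,\phi(s)\,ds,\qquad t\ge 0 .
\end{align*}
What remains is to check that the integrand is componentwise non-negative, and this is precisely where the structural assumption $a_{ij}\ge 0$ for $i\neq j$ is used: for $\tau=t-s\ge 0$ the matrix $e^{A\tau}$ has all entries non-negative. The quick way to see this (the ``only if'' direction of the fact cited before the lemma) is to write $A=B-\lambda I$ with $\lambda:=\max_i(-a_{ii})$, so that $B:=A+\lambda I$ has all entries non-negative; then $e^{A\tau}=e^{-\lambda\tau}\,e^{B\tau}$ and $e^{B\tau}=\sum_{k\ge 0}(B\tau)^k/k!$ is a convergent sum of entrywise non-negative matrices. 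Since also $\phi(s)\ge 0$, the integrand in the displayed formula is entrywise non-negative, hence $z(t)\ge 0$, i.e.\ $v(t)\le w(t)$ for all $t\ge 0$.

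I do not expect a genuine obstacle here; the one point deserving care is the regularity of $v$ and the sense in which \eqref{E:comp_ineq} holds. If $v$ is merely absolutely continuous the inequality is required only almost everywhere and $\phi\in L^1_{loc}$, and one must note that Duhamel's identity still holds, since $s\mapsto e^{-As}v(s)$ is absolutely continuous with a.e.\ derivative $-e^{-As}\phi(s)$ and one simply integrates from $0$ to $t$. An alternative route that avoids the explicit solution is a first-crossing argument on the perturbed comparison system $\dot w_\epsilon=A\,w_\epsilon+\epsilon\mathbf 1$, $w_\epsilon(0)=v_0+\epsilon\mathbf 1$: at a hypothetical first time $t^\ast$ and index $i$ with $v_i(t^\ast)=w_{\epsilon,i}(t^\ast)$ and $v(t^\ast)\le w_\epsilon(t^\ast)$, the off-diagonal non-negativity of $A$ forces $\dot v_i(t^\ast)\le (A w_\epsilon(t^\ast))_i=\dot w_{\epsilon,i}(t^\ast)-\epsilon<\dot w_{\epsilon,i}(t^\ast)$, contradicting that $t^\ast$ is the first crossing; letting $\epsilon\downarrow 0$ then recovers the non-strict conclusion. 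Either way, the non-negativity of the off-diagonal entries of $A$ --- equivalently, of $e^{At}$ for $t\ge 0$ --- is the single property doing all the work. \qed
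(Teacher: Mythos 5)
Your proof is correct, and it rests on exactly the fact the paper invokes when stating this lemma (cited from Beckenbach--Bellman): that $e^{At}$ is entrywise non-negative for $t\geq 0$ precisely when the off-diagonal entries of $A$ are non-negative. Your Duhamel/slack-variable argument simply fleshes out that cited idea in full detail, so it is essentially the same approach rather than a different route.
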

This result will henceforth be referred to as the `linear comparison principle' and the differential equation in \eqref{E:comp_eq} as the `comparison equation'.

\section{PROBLEM DESCRIPTION}\label{S:problem}
For the rest of this work, let us make the simplifying assumption that the dynamical system in (\ref{E:f}) is in polynomial form\footnote{Non-polynomial dynamics can be recasted into an equivalent polynomial form, with introduction of additional state variables and suitable equality constraints \cite{Antonis:2002, Antonis:2005,Anghel:2013,Anghel:2013b}.}, denoted by $f\in\mathbb{R}[x]^n$, and that the system in \eqref{E:f} is (locally) asymptotically stable.

\subsection{Decomposed System Model}
\label{subsec:decomp}
The dynamical system in \eqref{E:f} can be expressed in the form of $m$ ($\geq 2$) interacting, and asymptotically stable subsystems
\begin{subequations}\label{E:fi}
\begin{align}
\forall i =1,2,&\dots,m,\nonumber\\
\mathcal{S}_i:~&\dot{x}_i = f_i(x_i) + g_i(x), ~ x_i\in\mathbb{R}^{n_i}, ~x\in\mathbb{R}^n\\
&f_i({0})={0},\\
&g_i(\hat{x}_{i})={0},~\forall \hat{x}_{i}\in\left\lbrace x\in\mathbb{R}^n\!\left\vert ~x_j \!=\! 0,\forall j\!\neq\! i\right.\right\rbrace\\
\text{where,}~ x &= \left( x_1^T ,x_2^T, \dots, x_m^T\right)^T \in \mathbb{R}^{n}\\
\text{and}~ n &=\sum_{i=1}^m n_i\, , ~x_i\cap x_j = \emptyset\, . \label{E:non-overlap}
\end{align}
\end{subequations}
Here $x_i$ represents the states that belong to the $i$-th subsystem $\mathcal{S}_i$, $f_i\in\mathbb{R}[x_i]^{n_i}$ denotes the isolated subsystem dynamics, and $g_i\in\mathbb{R}[x]^{n_i}$ represents the neighbor interactions. 

{Let us assume that the interactions can be expressed as
\begin{align}\label{E:gij}
\forall i\in\left\lbrace 1,2,\dots,m\right\rbrace, \quad g_i(x) = \sum_{j\neq i} g_{ij}(x_i, x_j)\,,
\end{align}
where $g_{ij}\in\mathbb{R}[x_i,x_j]^{n_i}$ quantifies how subsystem $\mathcal{S}_j$ affects the dynamics of subsystem $\mathcal{S}_i$. Note that \eqref{E:gij} is not a very restrictive assumption, since given the choice of states $x_i$ of the subsystem $\mathcal{S}_i$, the rest of the subsystems can always be chosen in a way such that \eqref{E:gij} holds.
 We denote by
\begin{subequations}\label{E:Ni}
\begin{align}
&\mathcal{N}_i := \left\lbrace i\right\rbrace\cup\left\lbrace j\left\vert ~\exists \, \left\lbrace x_i,x_j\right\rbrace, ~\text{s.t.}~g_{ij}\left(x_i,x_j\right)\neq 0 \right.\right\rbrace \\
&\text{and }\bar{x_{i}} :=\bigcup_{j\in\mathcal{N}_i}\,x_j\,,
\end{align}\end{subequations}
the set of indices of the subsystems in the neighborhood of $\mathcal{S}_i$ (including the subsystem itself) and the states that belong to this neighborhood, respectively.}

The next step is to characterize the stability properties of the isolated subsystems
\begin{align*}
\forall  i \in\left\lbrace 1,2,\dots,m\right\rbrace,\quad &\dot{x}_i = f_i(x_i),~x_i\in\mathbb{R}^{n_i}  \, .
\end{align*}
by computing a polynomial Lyapunov function $V_i \in \mathbb{R}\left[x_i\right] $ for each $i$, and the corresponding estimate of the ROA as in \eqref{E:ROA2}.
An SOS based \textit{expanding interior algorithm}, \cite{Wloszek:2003,Anghel:2013}, is used to iteratively enlarge the estimate of the ROA by finding a `better' Lyapunov function at each step of the algorithm. At the completion of this iterative algorithm, the stability of each isolated subsystem (assuming no interaction) is quantified by its Lyapunov function $V_i\in\mathbb{R}[x_i]$, with a final estimate of the domain of attraction given by 
 \begin{align}
\mathcal{R}_i^0:= \left\lbrace x_i\in\mathbb{R}^{n_i}\left| V_i(x_i)\leq 1\right.\right\rbrace,~\forall i=1,2,\dots,m\,.
 \end{align}

\subsection{Stability under Interactions}
\label{subsec:inter_stab}
Let us define the domain
\begin{align}\label{E:ROA_isol}
\mathcal{R}^{0} &:=\left\lbrace x\in\mathbb{R}^{n}\left| ~x_i\in\mathcal{R}_i^0,\,~ \forall i=1,2,\dots,m\right.\right\rbrace\, ,
\end{align}
which could be interpreted as the ROA of the `free' interconnected system \eqref{E:fi}, in absence of the all the interactions. The disturbances coming from the neighbors can be expressed by the subsystem Lyapunov function level-sets. While the equilibrium at origin corresponds to the level sets $V_i(0)=0,\forall i$, any disturbance (or initial condition) away from this equilibrium would result in positive level-sets $V_i(x_i(0))=\gamma_i^0\in\left(0,1\right]$ for some or all of the subsystems. 

A necessary and sufficient condition of asymptotic stability (Definition~\ref{D:stability}) can then be translated into the condition
\begin{align}\label{E:cond_asymp}
\forall i, ~V_i(x_i(0))=\gamma_i^0\implies\forall i, ~\lim_{t\rightarrow +\infty}{V}_i(x_i(t))=0\, ,
\end{align}
where $x_i(t),~t>0$, are solutions of the coupled dynamics in \eqref{E:fi}. Even though \eqref{E:cond_asymp} reduces the dimensionality of the problem, it still remains a generally non-trivial problem.
An attractive, and scalable, alternative approach is to construct a vector Lyapunov function $V:\mathbb{R}^n\rightarrow\mathbb{R}^m$
\begin{align}\label{E:vecLyap}
V(x) &:= \left[V_1(x_1)  ~~ V_2(x_2) ~~\dots ~~V_m(x_m)\right]^T , 
\end{align} 
and use a comparison equation to certify if the condition \eqref{E:cond_asymp} holds. Restricting our focus to the linear comparison principle (Lemma~\ref{L:comparison}), the aim is to seek an $A=[a_{ij}]\in\mathbb{R}^{m\times m}$ and a domain $\mathcal{D}\subset\mathcal{R}^0$, such that
\begin{subequations}\label{E:comparison}
\begin{align}
\dot{V}(x)&\leq~ AV(x),~\forall x\in\mathcal{D}\subset\mathcal{R}^0, \label{E:comparison_VAV}\\
\text{where,}\quad & a_{ij}\geq 0~\forall i\neq j\, ,\\
		 &  \text{$A=[a_{ij}]$ is Hurwitz, and} \\
		\quad & \text{$\mathcal{D}$ is invariant under the dynamics \eqref{E:f}.}
\end{align}
\end{subequations}
If there exist a `comparison matrix' $A=[a_{ij}]$ and $\mathcal{D}\subset\mathcal{R}^0$ satisfying \eqref{E:comparison}, then any $x(0)\in\mathcal{D}$ would guarantee exponentially convergence of $V(x(t))$ to the origin (Lemma~\ref{L:comparison}),
\begin{align}
\exists \,b,c \!>\! 0 ~\text{s.t.} ~\left\| V(x(t))\right\|_2 \!< c e^{-bt} \left\| V(x(0))\right\|_2\, , ~\forall t \!>\!0\, ,
\end{align} 
which also translates into exponential convergence of the states themselves \cite{Siljak:1972}. Note that, $\mathcal{D}\subset\mathcal{R}^0$, if exists, presents an estimate of the ROA of the full interconnected system.

\section{COMPUTING THE COMPARISON EQUATION}\label{S:stability}

\subsection{Traditional Approach}\label{subsec:stab_old}
In \cite{Siljak:1972,Weissenberger:1973,Araki:1978,Jocic:1978}, and related works, authors laid out a formulation of the linear comparison equation using certain conditions on the Lyapunov functions and the neighbor interactions. It was observed that if there exists a set of Lyapunov functions, $v_i:\mathbb{R}^{n_i}\rightarrow\mathbb{R}\,,~\forall\,i=1,2,\dots,m,$ satisfying the following conditions
\begin{subequations}\label{E:cond_VLF}
\begin{align}
\forall i\in\left\lbrace 1,2,\dots,m\right\rbrace, ~\exists\,& \tilde{\eta}_{i1},\tilde{\eta}_{i2},\tilde{\eta}_{i3}>0\,~\text{such that,}\nonumber\\
\forall x_i\in\mathcal{D}_i \!\!\subset\!\mathcal{R}_i^0,~&\tilde{\eta}_{i1}\left\|x_i\right\|_2 \leq v_i(x_i) \leq \tilde{\eta}_{i2}\left\|x_i\right\|_2 \label{E:cond_VLF_1}\\
\text{and}~&\left(\nabla{v}_i\right)^T\!\!f_i\, \leq -\tilde{\eta}_{i3}\left\|x_i\right\|_2 \label{E:cond_VLF_2}
\end{align}
\end{subequations}
and if the interaction terms in \eqref{E:gij} satisfy
\begin{align}\label{E:cond_inter}
\forall i\in&\left\lbrace 1,\dots,m\right\rbrace, \,\forall j\!\in\!\mathcal{N}_i\backslash\left\lbrace i\right\rbrace, ~\exists \tilde{\zeta}_{ij}>0\,~\text{such that,}\nonumber \\
&\forall x_i\!\in\!\mathcal{D}_i, \,\forall x_j\!\in\!\mathcal{D}_j, ~\left\|\left(\nabla{v}_i\right)^T\!\!g_{ij}\right\|_2 \leq \tilde{\zeta}_{ij}\left\|x_j\right\|_2, 
\end{align}
then the following comparison equation can be formed,
\begin{subequations}\label{E:cond_A}
\begin{align}
\forall\, x(t)\!\!\in&\mathcal{D},~\dot{v}\left(x\left(t\right)\right) \leq \tilde{A}\,v\left(x\left(t\right)\right), ~ \tilde{A} = \left[\tilde{a}_{ij}\right]\in \mathbb{R}^{m\times m} \label{E:VAV}\\
\text{where,}~  &v(x) = \left[v_1(x_1)~ v_2(x_2)~\dots~v_m(x_m)\right]^T \!\!,\label{E:def_v}\\
&\mathcal{D}=\left\lbrace x\in\mathbb{R}^n\!\!\left|\, x_i\in\mathcal{D}_i,\,\forall i\! \in\!\!\!\left\lbrace 1,\dots,m\right\rbrace\right.\!\!\right\rbrace \!\!\subset\mathcal{R}^0, \\
 \text{and}\quad & \tilde{a}_{ij} = \left\lbrace \!\!\!\begin{array}{cl} -\tilde{\eta}_{i3}/\tilde{\eta}_{i2}, & j\!=\!i\\ \tilde{\zeta}_{ij}/\tilde{\eta}_{j1}, & j\!\in\!\mathcal{N}_i\backslash\left\lbrace i\right\rbrace\\ 0, & j\notin\mathcal{N}_i\end{array}\!\!\!\right.\!\!\!,\quad \forall \,i, \forall j \label{E:aij}
\end{align}
\end{subequations}
If the `comparison matrix' $\tilde{A}=[\tilde{a}_{ij}]$ is Hurwitz, then any invariant domain $\mathcal{R}\subseteq\mathcal{D}$ provides an estimate of a region of exponential stability of the full system \cite{Weissenberger:1973,Jocic:1978}.

\subsection{Motivation for Direct Approach}\label{subsec:stab_new}
While this approach provides very useful analytical insights into the construction of the comparison matrix $\tilde{A}=[\tilde{a}_{ij}]$, it has certain computational issues. This requires finding the bounds in \eqref{E:cond_VLF} and \eqref{E:cond_inter}, and also the Lyapunov functions $v_i,\,\forall\,i,$ that satisfy those. Clearly the polynomial Lyapunov functions, $V_i\,\forall\,i,$ cannot satisfy the linear bounds in \eqref{E:cond_VLF}. 

Assuming that the polynomial Lyapunov functions, $V_i\,\forall\,i$, we found using the \textit{expanding interior algorithm} (Sec.\,\ref{subsec:decomp}) are quadratic, we can define $v_i:=\sqrt{V_i}\,,\,\forall\, i$,  which would satisfy the conditions in \eqref{E:cond_VLF} \cite{Weissenberger:1973,Jocic:1978}. In such a case, one needs to find the following bounds,
\begin{subequations}\label{E:cond_new}
\begin{align}
\forall i,\forall j\in\mathcal{N}_i\backslash\left\lbrace i\right\rbrace,\forall x_i&\in\mathcal{D}_i ,\forall x_j\in\mathcal{D}_j\,, \nonumber\\
{\eta}_{i1}\left\|x_i\right\|_2^2 &\leq V_i(x_i) \leq {\eta}_{i2}\left\|x_i\right\|_2^2 \,,\label{E:cond_new_1}\\
\nabla{V}_i^T\!\!f_i\, &\leq -{\eta}_{i3}\left\|x_i\right\|_2^2\,, \label{E:cond_new_2}\\
\text{and}~ \left\|\nabla{V}_i^T\!\!g_{ij}\right\|_2 &\leq {\zeta}_{ij}\left\|x_i\right\|_2\left\|x_j\right\|_2\,, \label{E:cond_new_3}
\end{align}
\end{subequations}
for some positive scalars ${\eta}_{i1},{\eta}_{i2},{\eta}_{i3},{\zeta}_{ij}$. Then using simple algebra the bounds in \eqref{E:cond_VLF} and \eqref{E:cond_inter} can be obtained as
\begin{subequations}\label{E:bounds}
\begin{align}
\forall i,\,\forall j\in\mathcal{N}_i\backslash\left\lbrace i\right\rbrace,~\tilde{\eta}_{i1} = \sqrt{{\eta}_{i1}} \,,~&\tilde{\eta}_{i2} = \sqrt{{\eta}_{i2}} \, ,	\\
			\tilde{\eta}_{i3} = \frac{{\eta}_{i3}}{2\sqrt{{\eta}_{i2}}} ~\text{and }&\tilde{\zeta}_{ij} = \frac{{\zeta_{ij}}}{2\sqrt{{\eta}_{i1}}}\,.		
\end{align}
\end{subequations}
Thus the computation of each element of the comparison matrix $\tilde{A}$ in \eqref{E:cond_A} requires multiple optimization steps.  

We may also note that some of the bounds in \eqref{E:cond_VLF} and \eqref{E:cond_inter}, while convenient for analytical insights, need not be optimal for computing a Hurwitz comparison matrix. For example, in \eqref{E:cond_inter}, $\left\|\nabla v_i^T g_{ij}\right\|_2$ is function of both $x_i$ and $x_j$ but is bounded by using only the norm on $x_j$.

\subsection{SOS Based Direct Approach}\label{subsec:stab_new}

We propose to use SOS methods to directly compute the comparison equation in \eqref{E:comparison}, in a decentralized way by calculating each row of $A=[a_{ij}]$ directly at each subsystem level. Note that, in \eqref{E:comparison}, we will be using quadratic (or, in general, polynomial) Lyapunov functions which do not satisfy the bounds \eqref{E:cond_VLF}-\eqref{E:cond_inter}. But we may observe that,
\begin{lemma}\label{L:equivalence}
If there exist Lyapunov functions $v_i\,$, for each $i\in\left\lbrace 1,2,\dots,m\right\rbrace,$ satisfying the comparison equation in \eqref{E:VAV}-\eqref{E:def_v}, for some matrix $\tilde{A}=[\tilde{a}_{ij}]$ with
\begin{align*}
\tilde{a}_{ij}\geq 0~\forall\,i\neq j\,,~\text{and}~\sum_{j=1}^m \tilde{a}_{ij}<0~\forall i\,,
\end{align*}
then there exists another matrix $A=[a_{ij}]$ satisfying the comparison equation \eqref{E:comparison_VAV} with $V_i:=v_i^2\,,\,\forall\,i\,$, with 
\begin{align*}
{a}_{ij}\geq 0~\forall\,i\neq j\,,~\text{and}~\sum_{j=1}^m {a}_{ij}<\sum_{j=1}^m \tilde{a}_{ij}<0~\forall i\,.
\end{align*}
\end{lemma}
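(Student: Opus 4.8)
The plan is to write down the matrix $A$ explicitly in terms of $\tilde A$ and then verify the two sign conditions by a computation that is pointwise in $x$ along the trajectories of \eqref{E:fi}. Since each $v_i$ is a positive definite Lyapunov function and $V_i := v_i^2$, the chain rule gives $\dot V_i = \nabla V_i^T \dot x_i = 2 v_i\,\nabla v_i^T \dot x_i = 2 v_i \dot v_i$. Because $v_i \geq 0$, I would multiply the $i$-th row of the hypothesis $\dot v(x) \leq \tilde A v(x)$ by $2v_i$ without reversing the inequality, obtaining, for all $x$ in the domain $\mathcal{D}$ of \eqref{E:VAV},
\begin{align*}
\dot V_i \;\leq\; 2\tilde a_{ii}\, v_i^2 \;+\; 2\sum_{j\neq i}\tilde a_{ij}\, v_i v_j .
\end{align*}

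The key step is to re-express the cross terms $v_i v_j$ in the squared coordinates $V_k=v_k^2$. I would use the elementary inequality $2 v_i v_j \leq v_i^2 + v_j^2$; since $\tilde a_{ij}\geq 0$ for $j\neq i$ by hypothesis, this upper bound may be substituted term by term without flipping any inequality, which gives
\begin{align*}
\dot V_i \;\leq\; \left(2\tilde a_{ii} + \sum_{j\neq i}\tilde a_{ij}\right) v_i^2 \;+\; \sum_{j\neq i}\tilde a_{ij}\, v_j^2 \;=\; \sum_{j=1}^m a_{ij}\, V_j ,
\end{align*}
where one sets $a_{ii} := 2\tilde a_{ii} + \sum_{j\neq i}\tilde a_{ij}$ and $a_{ij} := \tilde a_{ij}$ for $j\neq i$. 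Hence $A=[a_{ij}]$ satisfies \eqref{E:comparison_VAV} with the vector Lyapunov function $V$ of \eqref{E:vecLyap}, on the same domain $\mathcal{D}$.

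It then remains to verify the sign pattern of $A$. Off-diagonal nonnegativity is immediate: $a_{ij}=\tilde a_{ij}\geq 0$ for $j\neq i$, which is exactly the structure required by \eqref{E:comparison} and Lemma~\ref{L:comparison}. For the row sums, a one-line computation gives $\sum_{j=1}^m a_{ij} = 2\tilde a_{ii} + 2\sum_{j\neq i}\tilde a_{ij} = 2\sum_{j=1}^m \tilde a_{ij}$; since $\sum_j \tilde a_{ij}<0$ by hypothesis, doubling a strictly negative quantity yields $\sum_j a_{ij} = 2\sum_j \tilde a_{ij} < \sum_j \tilde a_{ij} < 0$, which is precisely the claimed chain of strict inequalities (and in particular $a_{ii}<0$, so $A$ has the negative-diagonal, nonnegative-off-diagonal, strictly row-diagonally-dominant structure that \eqref{E:comparison} asks for).

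There is no deep obstacle here; the one point that needs attention is the bookkeeping of the factor $2$: the same $2$ appears in $\dot V_i = 2v_i\dot v_i$ and in the doubled cross terms $2v_iv_j$, and it is exactly this coincidence that makes the new row sum equal to $2\sum_j \tilde a_{ij}$, so that the strict negativity of $\sum_j \tilde a_{ij}$ propagates into a strict improvement rather than a non-strict one. (The possible non-differentiability of $v_i=\sqrt{V_i}$ at the origin is harmless, since the origin is an equilibrium and both sides of the $i$-th inequality vanish there.) Finally, I would note that $2v_iv_j\leq v_i^2+v_j^2$ can be replaced by the weighted Young inequality $2v_iv_j\leq \epsilon_{ij}v_i^2+\epsilon_{ij}^{-1}v_j^2$ for any $\epsilon_{ij}>0$, giving a whole family of admissible matrices $A$; the choice $\epsilon_{ij}=1$ just gives the cleanest algebra, whereas tuning the $\epsilon_{ij}$ --- or, better still, computing the $a_{ij}$ directly by SOS as proposed in Sec.~\ref{S:stability} --- is what one would do to obtain the least conservative comparison matrix.
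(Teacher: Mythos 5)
Your proposal is correct and follows essentially the same route as the paper's proof in the appendix: multiply $\dot v_i \le \sum_j \tilde a_{ij} v_j$ by $2v_i \ge 0$, bound the cross terms via $2v_iv_j \le V_i + V_j$ (valid since $\tilde a_{ij}\ge 0$ for $j\ne i$), and take $a_{ii} = \tilde a_{ii} + \sum_{j=1}^m \tilde a_{ij}$, $a_{ij}=\tilde a_{ij}$ for $j\ne i$, so the new row sum becomes $2\sum_j \tilde a_{ij} < \sum_j \tilde a_{ij} < 0$. Your explicit verification of the strict row-sum improvement and the remark on weighted Young's inequality are fine additions but do not change the argument.
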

\begin{proof}
Please refer to Appendix~\ref{A:proof}.
\end{proof}
It will be useful to note here that, an application of Gershgorin's Circle theorem \cite{Bell:1965} says that if a matrix with negative diagonal elements is strictly diagonally dominant\footnote{$A=[a_{ij}]$ is strictly diagonally dominant if $\sum_{j\neq i}\left|a_{ij}\right|<\left|a_{ii}\right|,\forall i$.} then the matrix is Hurwitz. 
We are now in a position to outline the SOS based procedure to directly compute the matrix $A=[a_{ij}]$ in the comparison equation \eqref{E:comparison}.

In this work, we are interested in $\mathcal{D}_i\,,\forall\,i$, of the form,
\begin{subequations}\label{E:Di}
\begin{align}
\forall i,~\mathcal{D}_i&:=\left\lbrace x_i\in\mathbb{R}^{n_i}\left| V_i(x_i)\leq \gamma_i^0\right.\right\rbrace, \quad \gamma_i^0\in\left(0,1\right), \\
\text{and,}~\mathcal{D}&:= \left\lbrace x\in\mathbb{R}^n\left| \bigcap_{i} V_i(x_i)\leq \gamma_i^0\right.\right\rbrace \, .\end{align}
\end{subequations}
Note that we exclude the boundary of the isolated subsystem ROA, $\gamma_i^0=1\,\forall\,i$, for reasons explained later. The comparison equation in \eqref{E:comparison_VAV} can then be translated into
\begin{subequations}\label{E:Vaij}
\begin{align}
\forall\,i\,,~\dot{V}_i(x_i)&\leq \sum_{j=1}^ma_{ij}V_j(x_j),~ \forall\,x \in \mathcal{D},\\
\text{i.e.,}~ \dot{V}_i&\leq  \sum_{j\in\mathcal{N}_i}a_{ij}V_j,~ \text{when}~V_j\leq \gamma_j^0~\forall j\in\mathcal{N}_i\,,
\end{align}\end{subequations}
since we know that $a_{ij}=0~\forall j\notin\mathcal{N}_i$. Using the Positivstellensatz theorem (Theorem~\ref{T:Putinar}), with $u_i:=\left(\gamma_i^0-V_i(x_i)\right)$ and $\mathcal{K}=\mathcal{D}$, we can cast \eqref{E:Vaij} into an SOS feasibility problem,
\begin{subequations}\label{E:Vaij_SOS}
\begin{align}
&- \nabla V_i^T\left(f_i+g_i\right) + \sum_{j\in\mathcal{N}_i} \left(a_{ij}V_j - \sigma_{ij}\left(\gamma_j^0-V_j \right)\right) \in \Sigma[\bar{x}_i] \\
&~\text{with}\quad\sigma_{ij}\in\Sigma[\bar{x}_i],~\forall i\in\left\lbrace 1,2,\dots,.m\right\rbrace, \forall j\in\mathcal{N}_i\,.
\end{align}\end{subequations}
where $\bar{x}_i$ was defined in \eqref{E:Ni}. The goal is to find the `optimal' scalars $a_{ij}\,\forall i,j\in\mathcal{N}_i$ satisfying \eqref{E:Vaij_SOS} so as to obtain the tightest possible bound in \eqref{E:comparison_VAV}. We can thus formulate the following SOS optimization problem,
\begin{align}\label{E:sos_A}
\forall i\in\left\lbrace 1,2,\dots,m\right\rbrace,\quad \min_{\sigma_{ij}}  \sum_{j\in\mathcal{N}_i}a_{ij}\,,~\text{subject to} ~\eqref{E:Vaij_SOS}\,.
\end{align}
This simple SOS formulation helps us find the comparison equation \eqref{E:comparison} in a decentralized way, by computing each row of the comparison matrix $A=[a_{ij}]$ in a single optimization problem at each subsystem level. The optimization problem can be easily implemented on a parallel platform, with the complexity of the problem essentially dependent on the size of the largest neighborhood $\mathcal{N}_i$.

Further note that, if the minimal values of all the row-sums in \eqref{E:sos_A} are negative, then the matrix $A=[a_{ij}]$ thus found is a strictly diagonally dominant with negative diagonal entries, and hence, Hurwitz \cite{Bell:1965}. However, if $\sum_{j=1}^ma_{ij}\geq0$ for any $i$, then the eigenvalues of $A=[a_{ij}]$ need to be computed. 

Finally a note on invariance of the domain $\mathcal{D}$, which along with the presence of Hurwitz $A=[a_{ij}]$ guarantees that $\mathcal{D}$ is a domain of exponential stability, as noted in \eqref{E:comparison}. According to \cite{Weissenberger:1973}, an estimate of the ROA can be given by
\begin{subequations}
\begin{align}
\mathcal{R} := &\left\lbrace x\in\mathcal{D}\left| \max_i\left(\frac{V_i(x_i)}{p_i}\right)\leq \min_j\left(\frac{\gamma_j^0}{p_j}\right)\right.\right\rbrace\,, \\
\text{where,}~&p_i>0,\quad\forall i \in\left\lbrace 1,2,\dots,m\right\rbrace\,,\\
\text{and}~	& Ap<0,\quad p := \left(p_1,p_2,\dots,p_m\right)^T\,.
\end{align}
\end{subequations}
Then it is easy to see that,
\begin{align}\label{E:RAD}
\mathcal{R} \equiv \mathcal{D},~\text{if}~A\gamma^0<0,~\gamma^0:=\left(\gamma_1^0,\gamma_2^0,\dots,\gamma_m^0\right)^T.
\end{align}

\section{NUMERICAL EXAMPLE}\label{S:example}

 \subsection{Model Description}\label{S:model}
 {W}{e} consider a network of nine Van der Pol `oscillators' \cite{van1926}, with parameters of each oscillator chosen to make them individually stable. Each Van der Pol oscillator constitutes a subsystem, with the interconnections shown below
\begin{align}
\begin{array}{lll}
\mathcal{N}_1:\left\lbrace 1, 2, 5, 9\right\rbrace & \mathcal{N}_2:\left\lbrace 2, 1, 3\right\rbrace & \mathcal{N}_3:\left\lbrace 3, 2, 8\right\rbrace\\ 
\mathcal{N}_4:\left\lbrace 4, 6, 7\right\rbrace & \mathcal{N}_5:\left\lbrace 5, 1, 6\right\rbrace & \mathcal{N}_6:\left\lbrace 6, 4, 5\right\rbrace\\
\mathcal{N}_7:\left\lbrace 7, 4, 8, 9\right\rbrace & \mathcal{N}_8:\left\lbrace 8, 3, 7\right\rbrace & \mathcal{N}_9:\left\lbrace 9, 1, 7\right\rbrace\,.
\end{array}
\end{align}
The dynamics of each oscillator, in presence of the neighbor interactions, is given by  
 \begin{subequations}
\begin{align}
\forall j\in&\left\lbrace 1,2,\dots,9\right\rbrace,\notag\\
	&\dot{x}_{j,1}= x_{j,2}\, \\
	&\dot{x}_{j,2}=\mu_jx_{j,2}\left(1-x_{j,1}^2\right) - x_{j,1} +x_{j,1}\!\!\!\!\!\!\sum_{k\in\mathcal{N}_j\backslash\left\lbrace j\right\rbrace}\!\!\!\!\!\!\beta_{jk}x_{k,2}\,,
\end{align}
\end{subequations}
where $\mu_j\,,\,\forall j$, are chosen randomly from $\left(-3\,,\,-1\right)$ and the interaction coefficients $\beta_{jk}\,,\,\forall j,\forall k\!\!\in\!\!\mathcal{N}_j\backslash\left\lbrace j\right\rbrace$, are chosen randomly from $\left(-0.4\,,\,0.4\right)$. 

Using the expanding interior algorithm, we find estimates of the ROAs of the isolated, or `free', subsystems via quadratic Lyapunov functions. As an example, Fig.\,\ref{F:ROAcompare} shows a comparison of the true ROA of the isolated subsystem 9 and an estimate using a quadratic Lyapunov function,
\begin{subequations}\begin{align}
\mathcal{R}_9^0&=\left\lbrace \left(x_{9,1},x_{9,2}\right)\left\vert~ V_9\leq 1\right.\right\rbrace, \\
\text{where,}~V_9&=0.595\,x_{9,1}^2 + 0.227\,x_{9,1}\,x_{9,2} + 0.520\,x_{9,2}^2\,.
\end{align}\end{subequations}
\begin{figure}[thpb]
      \centering
	\includegraphics[scale=0.5]{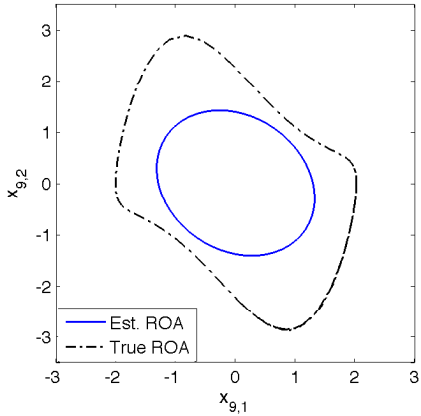}
      \caption{Comparison of estimated and true ROA for isolated subsystem 9.}
      \label{F:ROAcompare}
   \end{figure}   
   
\subsection{Exponential Stability of Isolated Subsystems}
Existence of a comparison matrix $A=[a_{ij}]$ requires that the diagonal entries $a_{ii}$ are negative, which necessitates that 
\begin{align}\label{E:alpha}
\forall i\,,~\exists\, \alpha_i>0\,,~\text{so that}~\nabla V_i^Tf_i\leq -\alpha_i\,V_i\,,~\forall\,x_i\in\mathcal{D}_i
\end{align}
where $\mathcal{D}_i\,,\,\forall i$, were defined in \eqref{E:Di}. Note that the condition \eqref{E:alpha} is a sufficient condition of exponential stability for the isolated subsystems, as in \eqref{E:Lyap_alpha}. We can use SOS optimization, similar to \eqref{E:Vaij_SOS}-\eqref{E:sos_A}, to find the maximal $\alpha_i\,,\,\forall i$, the `self-decay rates', for a set of given $\gamma_i^0\,,\,\forall i$. Higher values of $\alpha_i$ indicates better chance of finding a Hurwitz comparison matrix. In Fig.\,\ref{F:decay} we show the variations of $\alpha_i$ for each $i$, when the initial level set $\gamma_i^0$ is varied from 0 to 1. For each subsystem, as $\gamma_i^0$ approaches 1, $\alpha_i$ approaches 0. This shows that it is not possible to obtain a Hurwitz comparison matrix when the initial conditions lie close to the boundary of the estimated ROAs, and hence the exclusion of $\gamma_i^0=1$ in \eqref{E:Di}.

\begin{figure}[thpb]
      \centering
	\includegraphics[scale=0.5]{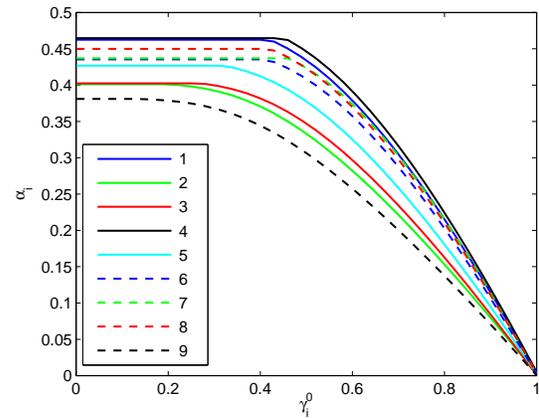}
      \caption{Evolution of self-decay rates against varying initial level-sets.}
      \label{F:decay}
   \end{figure}

\subsection{Comparison Equation}
We recall that two sufficient criteria for a domain $\mathcal{D}$ to be an estimate of the ROA, are that the comparison matrix in \eqref{E:comparison} is Hurwitz and $\mathcal{D}$ is an invariant domain under the dynamics \eqref{E:fi}. To compare the performance of the traditional approach and the direct approach, we need to monitor how well the above mentioned criteria are satisfied for a set of arbitrarily chosen $\mathcal{D}$. 

While this would require an exhaustive simulation over all possible domains $\mathcal{D}$ defined in \eqref{E:Di}, we choose to examine only those $\mathcal{D}$ where $\gamma_1^0=\gamma_2^0=\dots=\gamma_9^0=\gamma^*$, i.e. 
\begin{align}\label{E:Deq}
\mathcal{D}:=\left\lbrace x\in\mathbb{R}^9\left\vert \bigcap_{i=1}^9 V_1\leq \gamma^*\right.\right\rbrace
\end{align}
for some $\gamma^*\!\!\in\!\!\left(0,1\right)$. For each $\gamma^*$, and domain $\mathcal{D}$, we compute the comparison matrices using the traditional and the direct approach. Denoting by $\text{Re}\left(\lambda\right)$ the real parts of the eigenvalues of a matrix, we note that if the maximum of $\text{Re}(\lambda)$ is negative, then the matrix is Hurwitz. Further, by applying \eqref{E:RAD}, the domain \eqref{E:Deq} is guaranteed to be invariant if the maximum row-sum of the comparison matrix is negative. Fig.\,\ref{F:CompMat} shows an evolution of these two properties (maximum $\text{Re}(\lambda)$ and maximum row-sum) for the comparison matrices, computed using the two approaches, for a range of $\gamma^*$.
\begin{figure}[thpb]
      \centering
	\includegraphics[scale=0.5]{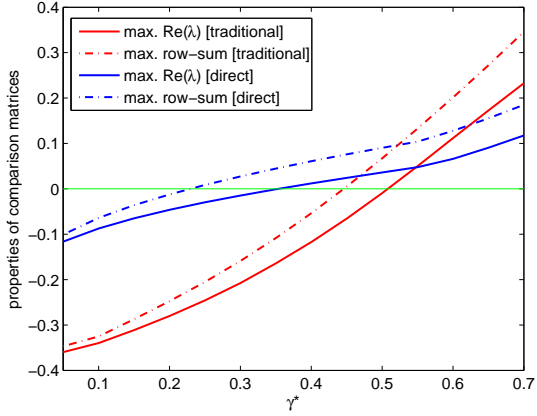}
      \caption{Evolution of the properties of comparison matrices, computed using traditional and direct approach, against varying initial level-sets.}
      \label{F:CompMat}
   \end{figure}

We note that both the maximum row-sum and the maximum $\text{Re}(\lambda)$ generally increases as $\gamma^*$ increases from 0 to 1, indicating that as the domain $\mathcal{D}$ `expands', it becomes more difficult to certify stability. We also note that, for both approaches, the maximum row-sum becomes positive before maximum $\text{Re}(\lambda)$, indicating that the `invariance' criterion is lost before the `Hurwitz' criterion. Significantly, we also note that both the Hurwitz and invariance criteria are satisfied for a wider range of $\gamma^*$ in case of the direct approach than in the case of the traditional one. Thus, with regards to both the criteria, the direct approach is seen to perform better than the traditional approach. 

\subsection{Test Case}
Let us illustrate how this method can be used to certify exponential convergence of a given initial condition to the origin. The system dynamics is evolved against a randomly generated initial condition, and is found to be converging to the origin. Fig.\,\ref{F:states} shows the evolution of the states belonging to subsystems - 2,\,6,\,7 and 8. 
\begin{figure}[thpb]
      \centering
	\includegraphics[scale=0.5]{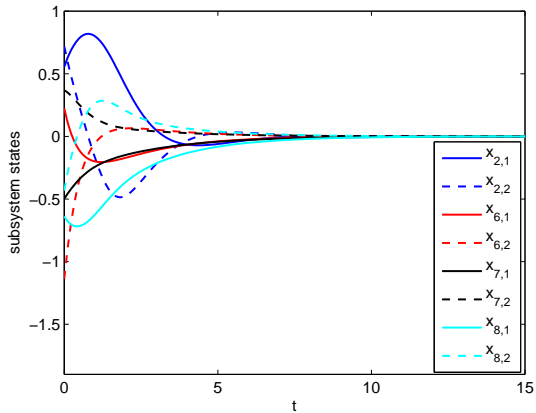}
      \caption{Evolution of subsystem states under an arbitrary disturbance.}
      \label{F:states}
   \end{figure}
   The initial condition yields the following level sets,
\begin{align*}
\gamma^0 &= \left[ \,0.08\,,\,    0.56\,,\,    0.58\,,\,    0.31\,,\,    0.08\,,\,    0.61\,,\,    0.18\,,\,    0.45\,,\,    0.14\,\right]^T
\end{align*} 
which is then used to define the domain $\mathcal{D}$, in \eqref{E:Di}. Then the SOS-based direct approach is used to compute the comparison matrix, $A\in\mathbb{R}^{9\times 9}$, with maximum $\text{Re}(\lambda)$ as -0.078, and $A\gamma^0<0$. The solution, $w(t)\in\mathbb{R}^9$, of the corresponding comparison equation $\dot{w}=A\,w\,,\,w(0)=\gamma^0$, is plotted against the actual Lyapunov level sets in Fig.\,\ref{F:level}, for subsystems 2, 6, 7 and 8.
\begin{figure}[thpb]
      \centering
	\includegraphics[scale=0.5]{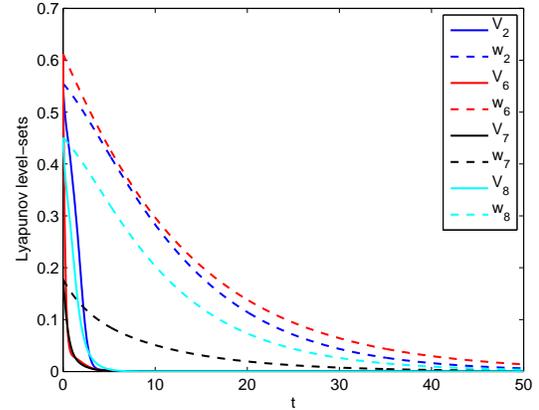}
      \caption{Comparison of the actual Lyapunov level sets and their upper bounds from the linear comparison equation.}
      \label{F:level}
   \end{figure}
The trajectories $w(t)$ exponentially converge to zero and, from Lemma\,\ref{L:comparison}, provide an upper bound on the corresponding subsystem Lyapunov function level sets.

When the same procedure is done with the traditional approach, we obtain a Hurwitz comparison matrix, $\tilde{A}$, with maximum $\text{Re}(\lambda)$ as -0.001, but with $\tilde{A}\left(\gamma^0\right)^{1/2}>0$, thus violating the invariance condition.

\section{CONCLUSIONS AND FUTURE WORKS}\label{S:conclusion}
\subsection{Conclusions}
We have presented an SOS based direct approach to compute the linear comparison principle for stability analysis of interconnected systems. We have also discussed the traditional approach to obtaining the comparison equations, and shown how the direct approach can yield `better', or less conservative, certificates of exponential stability. Using a network of Van der Pol systems we have presented a comparison of the two approaches. The proposed approach can be implemented on a suitable parallel platform where each row of the comparison matrix, corresponding to each subsystem, is computed in parallel.

\subsection{Future Works}
A decentralized control framework can be visualized where each subsystem computes a local control law that will guarantee satisfaction of the Hurwitz and invariance conditions. SOS methods can be used to extend the stability analysis to higher order, and more general, comparison equations. Also, it would be interesting to see how the use of higher order (for example, quartic) Lyapunov functions in the comparison equation affects the conservativeness of the stability certificates.

\bibliographystyle{IEEEtran}
\bibliography{references}

\appendix

\subsection{Proof of Lemma~\ref{L:equivalence}}\label{A:proof}
Since $V_i=v_i^2~\forall\,i$ and $\tilde{a}_{ij}>0~\forall\,i\neq j$, we have
\begin{align}
\forall i\in\left\lbrace 1,2,\dots,m\right\rbrace\,,~\dot{V}_i &\leq 2v_i\sum_{j=1}^ma_{ij}v_j\notag\\
		& \leq 2\tilde{a}_{ii}V_i + 2\sum_{j\neq i}\tilde{a}_{ij}v_iv_j \notag\\
		& \leq 2\tilde{a}_{ii}V_i + \sum_{j\neq i}\tilde{a}_{ij}\left(V_i+V_j\right)\notag\\
		& = \left(\tilde{a}_{ii}+\sum_{j=1}^m\tilde{a}_{ij}\right)V_i + \sum_{j\neq i}\tilde{a}_{ij}V_j
\end{align}
Choosing $a_{ii}=\left(\tilde{a}_{ii}+\sum_{j=1}^m\tilde{a}_{ij}\right)~\forall\,i$, and $a_{ij}=\tilde{a}_{ij}~\forall i\neq j$, and recalling that $\sum_{j=1}^m\tilde{a}_{ij}<0$ we may conclude the proof.

\addtolength{\textheight}{-3cm}   

\end{document}